\crefname{section}{Section}{Sections}
\crefname{subsection}{\S}{\S\S}
\theoremstyle{plain}
\newtheorem{lemma}{Lemma}[section]
\newtheorem{proposition}[lemma]{Proposition}
\newtheorem{corollary}[lemma]{Corollary}
\newtheorem{theorem}[lemma]{Theorem}
\theoremstyle{nonumberplain}
\theoremstyle{plain}
\newtheorem{definition}[lemma]{Definition}
\newtheorem{example}[lemma]{Example}
\newtheorem{remark}[lemma]{Remark}
\crefname{definition}{definition}{definitions}
\crefname{ex}{example}{examples}
\crefname{remark}{remark}{remarks}
\crefname{convention}{convention}{conventions}
\crefname{lemma}{lemma}{lemmas}
\crefname{proposition}{proposition}{propositions}
\crefname{corollary}{corollary}{corollaries}
\crefname{theorem}{theorem}{theorems}
\crefname{assumption}{assumption}{Assumptions}
\crefname{equation}{}{}
\theoremstyle{nonumberplain}
\newtheorem{proof}{Proof}
\newcommand\bC{{\mathbb C}}
\newcommand\bS{{\mathbb S}}
\newcommand\bZ{{\mathbb Z}}
\newcommand\cO{{\mathcal O}}
\newcommand\cP{{\mathcal P}}
\DeclareMathOperator{\id}{id}
\newcommand{\qedhere}{\mbox{}\hfill\ensuremath{\blacksquare}}
\title{Leavitt vs. $C^*$ pullbacks}
\author{Alexandru Chirvasitu}
\begin{document}

\date{}

\newcommand{\Addresses}{{
  \bigskip
  \footnotesize

  \textsc{Department of Mathematics, University at Buffalo, Buffalo,
    NY 14260-2900, USA}\par\nopagebreak \textit{E-mail address}:
  \texttt{achirvas@buffalo.edu}

}}

\maketitle

\begin{abstract}
  We show that certain pullbacks of $*$-algebras equivariant with respect to a compact group action remain pullbacks upon completing to $C^*$-algebras. This unifies a number of results in the literature on graph algebras, showing that pullbacks of Leavitt path algebras lift automatically to pullbacks of the corresponding graph $C^*$-algebras.
\end{abstract}

\noindent {\em Key words: Leavitt path algebra, graph $C^*$-algebra, pullback}

\vspace{.5cm}

\noindent{MSC 2010: 18A30; 46L05}


\section*{Introduction}

The present note was prompted by \cite{trm-zqi,trm-fXl} and the desire to explain the relationship between their respective main results, \cite[Theorem, p.2]{trm-zqi} and \cite[Theorem 3.2]{trm-fXl} respectively. The former puts a pullback structure on a graph $C^*$-algebra, whereas the latter proves the parallel result for the corresponding Leavitt path algebras.

Given the close relationship between the two settings, it would be natural to seek an abstract framework that would allow one to simply morph the Leavitt result into its $C^*$ analogue without having to retrace the proofs. We propose such a framework here. 

\subsection*{Acknowledgements}

The author was partially supported by NSF grant DMS-1801011. The work is also part of the project QUANTUM DYNAMICS supported by EU grant H2020-MSCA-RISE-2015-691246 and Polish Government grant 3542/H2020/2016/2.

Exchanges with Piotr M. Hajac and Mariusz Tobolski have been very enlightening.

\section{Preliminaries}\label{se.prel}

\subsection{Generalities on equivariant structures}\label{subse.gcast}

We will work with {\it $G$-equivariant structures} on $*$ or $C^*$-algebras (or more generally Banach spaces) for a compact group $G$. For Banach spaces $V$ this simply means a {\it strongly continuous} action as automorphisms (isometries for Banach spaces or $C^*$ automorphisms for $C^*$-algebras) in the sense that
\begin{equation*}
G\ni g\mapsto gv \in V 
\end{equation*}
is a continuous map for each $v\in V$; this is the customary and apparently most appropriate continuity assumption on actions on operator algebras (e.g. \cite[\S 2.2]{phil-equiv}).

On the other hand, for $*$-algebras $A$ a $G$-equivariant structure means a comodule structure
\begin{equation*}
  A\to A\otimes \cO(G),
\end{equation*}
where $\cO(G)$ is the Hopf algebra of {\it representative functions} on $G$: the span of matrix coefficients of finite-dimensional continuous $G$-representations. When $G=\bS^1$, which will be the case in the concrete applications discussed below, $\cO(G)$ is nothing but the algebra $\bC[t^{\pm 1}]$ of Laurent polynomials and an equivariant structure is a $\bZ$-grading on the $*$-algebra $A$. 

For a $G$-$C^*$-algebra $A$ we have the faithful expectation
\begin{equation*}
  A\ni a\mapsto \int_G g\triangleright a \ \mathrm{d}\mu(g)\in A^G
\end{equation*}
onto the fixed-point subalgebra $A^G\le A$, where $\mathrm{d}\mu$ is the Haar measure on $G$.

Given a $G$-equivariant inclusion $A\le \overline{A}$ of a $*$-algebra into a $C^*$-algebra we have (see e.g. \cite[Proposition 6.1]{cpt})

\begin{lemma}
  The invariant subalgebra $A^G$ is dense in $\overline{A}^G$. 
\qedhere  
\end{lemma}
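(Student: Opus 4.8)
The plan is to use the $G$-invariant conditional expectation supplied in the statement to project arbitrary approximants in $\overline{A}$ onto the fixed points, and then to argue that on the dense $*$-subalgebra $A$ this expectation lands back inside $A$. Write $E\colon\overline{A}\to\overline{A}^G$ for the map $E(a)=\int_G g\triangleright a\,\mathrm{d}\mu(g)$. Since the action is isometric and $\mathrm{d}\mu$ is a probability measure, $E$ is contractive, hence norm-continuous; it is a projection onto $\overline{A}^G$ and in particular fixes every element of $\overline{A}^G$. Given a target $b\in\overline{A}^G$, density of $A$ in $\overline{A}$ furnishes a sequence $a_n\in A$ with $a_n\to b$. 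Applying $E$ and using continuity together with $E(b)=b$ yields $E(a_n)\to b$. The whole argument thus reduces to the single claim that $E(A)\subseteq A^G$: granting it, each $E(a_n)$ is an element of $A^G$ converging to $b$, which is exactly the desired density.

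To establish $E(A)\subseteq A^G$ I would unwind the comodule structure. The $G$-equivariant structure on the $*$-algebra $A$ is a coaction $\rho\colon A\to A\otimes\cO(G)$, and the compatibility built into the phrase ``$G$-equivariant inclusion'' means that the strongly continuous action on $\overline{A}$ restricts on $A$ to the algebraic action read off from $\rho$. Concretely, writing $\rho(a)=\sum a_{(0)}\otimes a_{(1)}$ as a \emph{finite} sum with $a_{(1)}\in\cO(G)$, one has $g\triangleright a=\sum a_{(0)}\,a_{(1)}(g)$. As this is a finite sum whose scalar coefficients depend continuously on $g$, I can integrate term by term:
\[
E(a)=\int_G g\triangleright a\,\mathrm{d}\mu(g)=\sum a_{(0)}\int_G a_{(1)}(g)\,\mathrm{d}\mu(g).
\]
Each $\int_G a_{(1)}\,\mathrm{d}\mu$ is a scalar, so $E(a)$ is a finite $\bC$-linear combination of the elements $a_{(0)}\in A$ and therefore lies in $A$. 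Since $E$ always takes values in the fixed points, $E(a)\in A\cap\overline{A}^G=A^G$, as required.

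The main obstacle is precisely the compatibility hidden in ``$G$-equivariant inclusion'': one must know that the purely algebraic, locally finite action encoded by the $\cO(G)$-comodule structure on $A$ genuinely agrees with the restriction of the strongly continuous action on $\overline{A}$. Once this intertwining is granted, the interchange of integral and finite sum above is unproblematic and no analytic subtlety beyond continuity of $E$ survives. The structural point worth emphasizing is that finiteness of $\rho(a)$—equivalently, that each $a\in A$ lies in a sum of finitely many isotypic components—is exactly what turns averaging into an algebraic operation and keeps $E(a)$ inside $A$; this is the mechanism that makes the lemma true.
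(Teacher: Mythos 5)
Your argument is correct, and it is the standard proof of this fact: the paper itself states the lemma without proof, deferring to \cite[Proposition 6.1]{cpt}, where exactly this averaging argument is used --- continuity of the contractive expectation $E$ onto $\overline{A}^G$ combined with the local finiteness of the $\cO(G)$-comodule structure to guarantee $E(A)\subseteq A^G$. The compatibility you flag as the ``main obstacle'' is not a gap: it is precisely what the paper's notion of a $G$-equivariant inclusion $A\le\overline{A}$ is taken to mean, so nothing further needs to be checked.
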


A number of elementary remarks apply to the functor $A\mapsto A^G$ from $G$-$C^*$-algebras to $C^*$-algebras. First, it is a right adjoint with left adjoint
\begin{equation*}
A\mapsto A\text{ equipped with the trivial }G-\text{action}.
\end{equation*}
In particular the fixed-point functor is continuous (i.e. it preserves all limits) \cite[\S V.5, Theorem 1]{mcl}. Additionally:

\begin{lemma}\label{le.monic}
  The fixed-point functor $A\mapsto A^G$ on $G$-$C^*$-algebras reflects monomorphisms, in the sense that if $f:A\to B$ is a morphism of $G$-$C^*$-algebras such that
  \begin{equation*}
    f^G:A^G\to B^G
  \end{equation*}
  is monic then so is $f$.
\end{lemma}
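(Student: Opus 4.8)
The plan is to reduce this categorical statement to a concrete injectivity assertion and then exploit the Haar-averaging expectation. First I would record the standard fact that in the category of ($G$-)$C^*$-algebras a morphism is monic precisely when it is injective on underlying sets. One direction is formal: the forgetful functor to $\cat{Set}$ is faithful, so injective morphisms are monic. For the converse, if $h\colon A\to B$ fails to be injective then $I:=\ker h$ is a nonzero closed two-sided ideal, which is moreover $G$-invariant because $h$ is equivariant; the inclusion $I\hookrightarrow A$ and the zero map $I\to A$ are then distinct morphisms equalized by $h$, so $h$ is not monic. Applying this to $f^G$, the hypothesis that $f^G$ is monic becomes the statement that $f^G\colon A^G\to B^G$ is injective, and by the same dictionary it suffices to prove that $f$ itself is injective.

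The key tool is the faithful conditional expectation
\[
E_A\colon A\ni a\mapsto \int_G g\triangleright a\,\mathrm{d}\mu(g)\in A^G
\]
introduced above, together with its counterpart $E_B$ on $B$. The crucial observation is that $f$ intertwines the two. Indeed, being a $*$-homomorphism $f$ is contractive, hence continuous, and being equivariant it satisfies $f(g\triangleright a)=g\triangleright f(a)$, so it may be pulled through the Bochner integral to yield
\[
f^G\circ E_A \;=\; E_B\circ f,
\]
where I am also using that $f$ restricts to $f^G$ on fixed points.

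With the intertwining in hand the argument is immediate. Suppose $f(a)=0$. Then $f(a^*a)=f(a)^*f(a)=0$, so
\[
f^G\bigl(E_A(a^*a)\bigr) \;=\; E_B\bigl(f(a^*a)\bigr)\;=\;0.
\]
Injectivity of $f^G$ forces $E_A(a^*a)=0$, and faithfulness of $E_A$ then yields $a=0$. Thus $f$ is injective, hence monic, as required.

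I expect the only genuinely non-formal ingredient to be the faithfulness of $E_A$, i.e. that $E_A(a^*a)=0$ implies $a=0$. This rests on the fact that the Haar measure on the compact group $G$ has full support: testing against an arbitrary state $\phi$ turns $E_A(a^*a)=0$ into the vanishing of the continuous nonnegative function $g\mapsto \phi\bigl((g\triangleright a)^*(g\triangleright a)\bigr)$, which therefore vanishes identically, and evaluating at the identity gives $\phi(a^*a)=0$ for every state, whence $a^*a=0$ and so $a=0$. The remaining steps, namely the monic/injective equivalence and the intertwining relation, are routine once one commits to passing through injectivity rather than arguing purely diagrammatically.
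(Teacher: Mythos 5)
Your proof is correct and follows essentially the same route as the paper: the paper's one-line argument is precisely that the faithfulness of the expectation $A\to A^G$ forces the kernel $J$ of $f$ to vanish once $J^G$ does, which is what your chain $f(a)=0\Rightarrow f^G(E_A(a^*a))=0\Rightarrow E_A(a^*a)=0\Rightarrow a=0$ spells out. The extra material you supply (monic $\Leftrightarrow$ injective, the intertwining $f^G\circ E_A=E_B\circ f$, and the full-support argument for faithfulness) is all correct and simply makes explicit what the paper leaves implicit.
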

\begin{proof}
  Indeed, since the expectation $A\to A^G$ is faithful, the kernel $J$ of $A\to B$ vanishes if and only if $J^G$ does, which happens by hypothesis.
\end{proof}


\subsection{Algebras attached to graphs}\label{subse.grph}

We will need some background on graph algebras: Leavitt path algebras as in \cite[Definition 1.2.3]{aasm-leavitt} and their analytic counterparts, graph $C^*$-algebras, defined, say, as in \cite[\S 2]{dt}. Briefly:

\begin{definition}\label{def.grph}
  A {\it graph} is a quadruple $E=(E^0,E^1,s,t)$ consisting of sets $E^0$ and $E^1$ of {\it vertices} and respectively {\it edges} and {\it source} and {\it target} maps $s,t:E^1\to E^0$.

  A vertex $v\in E^0$ is {\it regular} if $s^{-1}(v)$ is finite and non-empty (we also say that $v$ is a {\it finite emitter} and is not a {\it sink}).
\end{definition}

With this in place, recall \cite[Definition 1.2.3]{aasm-leavitt}:

\begin{definition}\label{def.lpa}
  Given a graph $E$, the {\it Leavitt path algebra} $L_k(E)$ over a unital commutative ring $k$ is the $k$-algebra equipped with an anti-multiplicative involution `$*$' defined by generators $v=v^*$ for $v\in E^0$ and $e,e^*$ for $e\in E^1$ subject to relations
  \begin{itemize}
  \item $vv'=\delta_{v,v'}$;
  \item $s(e)e=er(e)=e$;
  \item $e^*e'=\delta_{e,e'}r(e)$;
  \item $\sum_{e\in s^{-1}(v)}ee^*=v$ for all regular vertices $v\in E^0$.    
  \end{itemize}

  While in this generality the involution `$*$' is assumed to act as the identity on $k$, for $k=\bC$ one usually works with a modified definition whereby `$*$' is complex conjugation on $\bC$. This makes the corresponding algebra (which we will then denote simply by $L(E)$) into a complex $*$-algebra. 
\end{definition}

On the other hand (see \cite[\S 2]{dt} or \cite[\S 5.2]{aasm-leavitt}):

\begin{definition}\label{def.gca}
  The {\it graph $C^*$-algebra} $C^*(E)$ is the universal $C^*$-algebra generated by $v=v^*$, $v\in E^0$ and $e,e^*$ for $e\in E^1$ subject to the same relations as in \Cref{def.lpa}, together with the additional conditions
  \begin{equation*}
    ee^*\le s(e),\ \forall e\in E^1.
  \end{equation*}
\end{definition}

The additional requirement in \Cref{def.gca} is not needed if the graph $E$ is {\it row-finite}, i.e. if every vertex emits finitely many (possibly zero) edges. All graphs considered in \cite{trm-fXl,trm-zqi,hrt} referred to below are row-finite.

\begin{remark}
  It turns out that $C^*(E)$ is nothing but the $C^*$ envelope of $L(E)$, i.e. the obvious map $L(E)\to C^*(E)$ is an initial object in the category of $*$-morphisms from $L(E)$ into $C^*$-algebras \cite[\S 5.2]{aasm-leavitt}. 
\end{remark}

Graph algebras (Leavitt of $C^*$) admit $\bS^1$-actions in this sense (called the {\it gauge actions} in the literature \cite[Chapter 2]{raeb-bk} or \cite[\S 2.1]{aasm-leavitt}): $z\in \bS^1$ simply scales every edge generator $e\in E^1$ by $z$ in the $C^*$ case, while the corresponding grading on the corresponding Leavitt path algebra assigns degree $1$ to each $e$ and degree $-1$ to each $e^*$.

\section{Graph algebra pullbacks}\label{se.main}

Let $A\le \cP_G(\overline{A})\le \overline{A}$ be a dense inclusion of a $G$-$*$-algebra into a $G$-$C^*$-algebra as in \Cref{se.prel}. Our first remark is

\begin{lemma}\label{le.afag}
  If the invariant subalgebra $A^G$ admits a unique $C^*$ norm then the norm of $\overline{A}$ is the unique $G$-invariant $C^*$-norm on $A$.
\end{lemma}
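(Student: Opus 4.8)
The existence half is immediate: since $G$ acts on $\overline{A}$ by isometric $*$-automorphisms and preserves the dense $*$-subalgebra $A$, the restriction of $\|\cdot\|_{\overline A}$ to $A$ is a $G$-invariant $C^*$-norm. The content is uniqueness, so the plan is to fix an arbitrary $G$-invariant $C^*$-norm $p$ on $A$ and show $p=\|\cdot\|_{\overline A}|_A$. First I would form the completion $B$ of $A$ along $p$. Because every element of a comodule over the representative functions $\cO(G)$ is $G$-finite, the orbit maps $g\mapsto g\triangleright a$ are continuous into $(A,p)$, and the isometric (hence uniformly bounded) extension of the action to $B$ is again strongly continuous; thus $A\le B$ is a $G$-equivariant dense inclusion of the kind considered in \Cref{se.prel}.

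Next I would pin down the norm on the invariants. Applying the density lemma recorded in \Cref{subse.gcast} (that $A^{G}$ is dense in $\overline{A}^{G}$) to the inclusion $A\le B$ shows that $A^{G}$ is dense in $B^{G}$, so $B^{G}$ is a $C^*$-completion of $A^{G}$. By the standing hypothesis that $A^{G}$ carries a \emph{unique} $C^*$-norm, the restriction $p|_{A^{G}}$ must equal that unique norm, which I denote $q$; crucially, $q$ does not depend on $p$. The task is then to transport this forced agreement on invariants to all of $A$.

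The transport is effected by the faithful expectation. Write $E(a)=\int_G g\triangleright a\,\mathrm{d}\mu(g)$; for $G$-finite $a\in A$ this lands in $A^{G}$, and it is the \emph{same} algebraic map $A\to A^{G}$ regardless of which completion one works in. Equip $B$ with the $B^{G}$-valued inner product $\langle x,y\rangle=E(x^{*}y)$ and let $\cE$ be the resulting Hilbert $B^{G}$-module. Faithfulness of $E$ forces left multiplication $\lambda\colon B\to\cL(\cE)$ to be an injective $*$-homomorphism of $C^*$-algebras, hence isometric; using positivity of $\lambda(a^{*}a)$ together with the density of $A$ in $\cE$ (which holds since $\|b\|_{\cE}^{2}=\|E(b^{*}b)\|\le\|b\|_{B}^{2}$), this yields, for each $a\in A$,
\begin{equation*}
  p(a)^{2}=\|\lambda(a)\|^{2}=\sup\Bigl\{\,q\bigl(E(b^{*}a^{*}ab)\bigr)\ :\ b\in A,\ q\bigl(E(b^{*}b)\bigr)\le 1\,\Bigr\},
\end{equation*}
where I have used that the inner products $E(b^{*}a^{*}ab)$ and $E(b^{*}b)$ lie in $A^{G}$, on which the norm of $B^{G}$ is exactly $q$. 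The right-hand side is built only from $A$, the fixed algebraic expectation $E\colon A\to A^{G}$, and the fixed norm $q$; it is blind to $p$. Hence $p(a)$ takes the same value for every $G$-invariant $C^*$-norm, and since $\|\cdot\|_{\overline A}|_A$ is one such norm, it is the unique one.

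I expect the main obstacle to be the isometry of the left regular representation $\lambda$ attached to the faithful expectation: injectivity comes from faithfulness of $E$ (if $\lambda(a)=0$ then $E(e_i a^{*}a e_i)\to E(a^{*}a)=0$ along an approximate unit, forcing $a=0$), after which isometry is automatic for injective morphisms of $C^*$-algebras, but the non-unital bookkeeping with approximate units and the passage from $\cE$ to the dense subspace $A$ in the supremum deserve care. A secondary point to verify is the strong continuity of the extended $G$-action on $B$, which is what licenses the application of the density lemma.
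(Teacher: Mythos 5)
Your argument is correct, but it takes a noticeably different route from the paper's. The paper's proof is two lines of soft category-flavoured reasoning: an alternative $G$-invariant $C^*$-norm is packaged as a $G$-$C^*$-surjection $f\colon\overline{A}\to B$ faithful on $A$; the unique-norm hypothesis on $A^G$ (together with density of $A^G$ in both $\overline{A}^G$ and $B^G$) makes $f^G$ monic; and \Cref{le.monic} --- the fixed-point functor reflects monomorphisms, via faithfulness of the expectation applied to the $G$-invariant ideal $\ker f$ --- upgrades this to injectivity of $f$, so the epi--mono $f$ is an isometry. Your proof replaces that last reflection step with a quantitative version of the same mechanism: you realize $B$ isometrically on the Hilbert $B^G$-module $\cE$ built from the faithful expectation and extract the explicit formula $p(a)^2=\sup\{q(E(b^*a^*ab)):b\in A,\ q(E(b^*b))\le 1\}$, whose right-hand side is manifestly independent of $p$. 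The common engine is identical --- faithfulness of $E$ plus uniqueness of the $C^*$-norm on $A^G$ --- but the two implementations buy different things. The paper's is shorter and reusable (\Cref{le.monic} is quoted again in \Cref{th.cmpl}), at the cost of implicitly restricting to norms dominated by $\|\cdot\|_{\overline A}$, since it needs the comparison surjection $\overline{A}\to B$ to exist. Yours needs no comparison map at all, so it treats a genuinely arbitrary $G$-invariant $C^*$-norm (and you correctly flag and discharge the resulting obligations: strong continuity of the extended action on $B$ via $G$-finiteness, injectivity-hence-isometry of $\lambda$, and density of $A$ in the unit ball of $\cE$); the price is the Hilbert-module machinery, which is heavier than anything the paper invokes. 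Both proofs are sound.
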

\begin{proof}
  Consider a surjection $f:\overline{A}\to B$ of $G$-$C^*$-algebras, faithful on $A$. Its restriction $\overline{A}^G\to B^G$ is monic by the unique-norm assumption and hence $f$ is monic by \Cref{le.monic}. Since $f$ was also onto, we are done.
\end{proof}

\begin{corollary}\label{cor.unq}
  If the invariant subalgebra $A^G$ is AF then the norm of $\overline{A}$ is the only $G$-invariant $C^*$-norm on $A$.  
\end{corollary}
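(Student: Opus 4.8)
The plan is to derive this immediately from \Cref{le.afag} once I establish the standard fact that an AF algebra carries a unique $C^*$-norm. Concretely, I would read the hypothesis ``$A^G$ is AF'' as saying that $A^G$ is a directed union $A^G=\bigcup_i A_i$ of finite-dimensional $C^*$-subalgebras, so that its completion $\overline{A}^G$ is an AF $C^*$-algebra; this is consistent with the density of $A^G$ in $\overline{A}^G$ recorded earlier in \Cref{se.prel}. Granting that $A^G$ then admits a unique $C^*$-norm, \Cref{le.afag} yields the conclusion verbatim, so the entire content of the corollary is this norm-uniqueness.

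First I would treat the finite-dimensional case. A finite-dimensional $*$-algebra admitting a $C^*$-norm is a finite direct sum of full matrix algebras, and on such an algebra the $C^*$-norm is forced: for a self-adjoint element $a$ the norm equals the spectral radius $\max\{|\lambda| : a-\lambda 1\ \text{is not invertible}\}$, a quantity determined purely by the underlying $*$-algebra structure, and the $C^*$-identity $\|a\|^2=\|a^*a\|$ then pins down the norm of an arbitrary $a$ from that of the self-adjoint element $a^*a$. Hence each $A_i$ carries exactly one $C^*$-norm.

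Next I would pass to the union: any $C^*$-norm on $A^G$ restricts to a $C^*$-norm on each $A_i$, hence to the unique one found above, and since the $A_i$ exhaust $A^G$ the norm is determined on all of $A^G$. Therefore $A^G$ admits a unique $C^*$-norm, and the corollary follows from \Cref{le.afag}.

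The step I expect to require the most care is not any computation but the bookkeeping around what ``AF'' should mean for the a priori incomplete $*$-algebra $A^G$: I want the finite-dimensional subalgebras to genuinely exhaust $A^G$ (a local multimatrix structure), rather than merely to be dense in the completion $\overline{A}^G$, since a general dense $*$-subalgebra of an AF $C^*$-algebra need not inherit norm-uniqueness. In the intended applications---the gauge action on graph algebras---the algebraic fixed-point algebra $A^G$ is precisely such a directed union of matrix algebras, so this reading is the natural one and the obstacle dissolves.
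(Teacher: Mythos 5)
Your proof is correct and follows the same route as the paper: the paper's entire argument is the one-line observation that AF $*$-subalgebras of $C^*$-algebras have unique $C^*$-norms, after which \Cref{le.afag} applies verbatim. You simply spell out that standard fact (norm-rigidity of multimatrix algebras via the spectral radius, then exhaustion by the finite-dimensional pieces), and your closing remark about reading ``AF'' as a local multimatrix structure on $A^G$ itself, rather than on its completion, is exactly the intended interpretation.
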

\begin{proof}
Indeed, AF $*$-subalgebras of $C^*$-algebras have unique $C^*$-norms, so \Cref{le.afag} applies. 
\end{proof}

We consider a diagram
\begin{equation}\label{eq:6}
  \begin{tikzpicture}[auto,baseline=(current  bounding  box.center)]
    \path[anchor=base] (0,0) node (a) {$A$} +(3,-.5) node (d) {$D$} +(6,0) node (b) {$B$} +(3,.5) node (c) {$C$};
    \draw[->] (a) to[bend right=6] node[pos=.5,auto,swap] {$\scriptstyle \ell$} (d);
    \draw[->] (b) to[bend left=6] node[pos=.5,auto] {$\scriptstyle r$} (d);
    \draw[->] (c) to[bend right=6] node[pos=.5,auto,swap] {$$} (a);
    \draw[->] (c) to[bend left=6] node[pos=.5,auto,swap] {$$} (b);
  \end{tikzpicture}  
\end{equation}
in the category of $G$-$*$-algebras.

To state the main result of this section we need

\begin{lemma}\label{le:ker-is-dense}
Let $\pi:M\to N$ be a $*$-algebra surjection embedding in its $C^*$ completion $\overline{\pi}:\overline{M}\to \overline{M}$. Then, $\ker \pi$ is dense in $\ker\overline{\pi}$.   
\end{lemma}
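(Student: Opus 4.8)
The plan is to reduce the density claim to the injectivity of a quotient map between $C^*$-algebras, and then produce an inverse to that map using the universal property of the $C^*$ completion (correcting the evident typo, I read the target of $\overline\pi$ as $\overline N$ rather than $\overline M$).

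First I would fix notation, writing $M\le\overline M$ and $N\le\overline N$ for the dense embeddings into the completions, so that $\overline\pi\colon\overline M\to\overline N$ restricts to $\pi$ on $M$. Since $\overline\pi(M)=N$ is dense in $\overline N$ and the range of a $*$-homomorphism of $C^*$-algebras is closed, $\overline\pi$ is onto and $\overline N\cong\overline M/\ker\overline\pi$. Writing $I:=\ker\pi$ and $J:=\ker\overline\pi$, the inclusion $I\subseteq J$ together with the closedness of $J$ gives $\overline I\subseteq J$, where $\overline I$ is the closure of $I$ in $\overline M$ (routinely a closed two-sided ideal, since $I\trianglelefteq M$ and $M$ is dense). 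The assertion that $\ker\pi$ is dense in $\ker\overline\pi$ is exactly $\overline I=J$, and because $\overline I\subseteq J$ this amounts to showing that the canonical surjection
\[
q\colon \overline M/\overline I\longrightarrow \overline M/J\cong\overline N
\]
of $C^*$-algebras is injective.

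To build an inverse to $q$, I would start from the composite $*$-homomorphism $M\hookrightarrow\overline M\twoheadrightarrow\overline M/\overline I=:P$; it kills $I$, hence descends to a $*$-homomorphism $\tilde\psi\colon N\to P$ into the $C^*$-algebra $P$. Invoking the universal property of the $C^*$ completion $\overline N$ of $N$, the map $\tilde\psi$ extends to a $*$-homomorphism $\Psi\colon\overline N\to P$. It then remains to check that $\Psi$ and $q$ are mutually inverse: both are contractive, hence continuous, so it suffices to verify $q\circ\Psi=\mathrm{id}$ and $\Psi\circ q=\mathrm{id}$ on the dense subalgebras $N\le\overline N$ and (the image of) $M$ in $P$ respectively, and there both composites unwind to the identity by chasing a representative $m\in M$ through the definitions of $q$, $\Psi$, $\tilde\psi$ and $\pi$. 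This forces $q$ to be an isomorphism, whence $\overline I=J$.

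I expect the extension step — the use of the universal property of $\overline N$ — to be the essential point, and the only place where something genuine is needed. Without it the argument collapses: a surjection between $C^*$ completions can restrict to a bijection on the common dense $*$-subalgebra and still fail to be injective (the canonical surjection $C^*(G)\to C^*_r(G)$ for a non-amenable group $G$ is the cautionary example), so no reasoning based on density alone can substitute for it. Everything else — surjectivity of $\overline\pi$, that $\overline I$ is an ideal, and the two continuity-plus-diagram-chase verifications — is routine.
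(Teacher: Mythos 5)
Your proposal is correct and follows essentially the same route as the paper: both reduce the claim to showing that the induced map $\overline M/\overline{\ker\pi}\to\overline N$ is an isomorphism, the paper by observing that the two $C^*$-algebras have the same $*$-representations (namely those of $M$, equivalently $N$, killing $\ker\pi$), and you by packaging that same observation as an explicit inverse built from the universal property of the completion $\overline N$. Your version spells out the step the paper leaves implicit, and your closing caveat correctly isolates where universality (rather than mere density) is indispensable.
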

\begin{proof}
  First, note that the closure of $J:=\ker \pi$ in $\overline{M}$ is a $C^*$ ideal contained in $\ker\overline{\pi}$, so we have a factorization
  \begin{equation*}
 \begin{tikzpicture}[auto,baseline=(current  bounding  box.center)]
   \path[anchor=base] (0,0) node (1) {$\overline{M}$}
   +(2,.5) node (2) {$\overline{M}/\overline{J}$}
   +(4,0) node (3) {$\overline{N}$};
   \draw[->] (1) to[bend left=6] node[pos=.5,auto] {$\scriptstyle $} (2);
   \draw[->] (2) to[bend left=6] node[pos=.5,auto] {$\scriptstyle \bullet$} (3);
   \draw[->] (1) to[bend right=6] node[pos=.5,auto,swap] {$\scriptstyle \overline{\pi}$} (3);
 \end{tikzpicture}
\end{equation*}
The fact that the $\bullet$ map in the above diagram is an isomorphism follows from an examination of the (Hilbert space) representations of the two $C^*$-algebras: those of $\overline{M}/\overline{J}$ are precisely the $*$-representations of $M$ vanishing on $J=\ker \pi$, and hence precisely the $*$-representations of $N$ (and hence of $\overline{M}$).
\end{proof}


As a consequence, we have the following simple remark. 

\begin{lemma}\label{le:adm-surj}
  Let $\pi:M\to N$ be a $*$-algebra surjection embedding into its $C^*$ completion $\overline{\pi}$. Then, for every $y\in N$ of norm $\|y\|<\varepsilon$ in the completion $\overline{N}$ there is
  \begin{equation*}
    x\in \pi^{-1}(y),\ \|x\|<\varepsilon\text{ in }\overline{M}. 
  \end{equation*}
\end{lemma}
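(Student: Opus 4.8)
The plan is to reduce everything to the quotient-norm description of a surjective $C^*$-morphism, and then to correct a $\overline{M}$-preimage into a genuine algebraic preimage using the kernel-density statement of \Cref{le:ker-is-dense}. First I would record that the completed map $\overline\pi:\overline M\to\overline N$ is itself surjective: its image is a $C^*$-subalgebra of $\overline N$ (the image of a $*$-homomorphism of $C^*$-algebras is automatically closed) containing the dense subalgebra $N=\pi(M)$, hence equal to all of $\overline N$. Consequently $\overline N$ is, isometrically, the quotient $\overline M/\ker\overline\pi$, so that the norm of any element of $\overline N$ is computed as the quotient norm over preimages drawn from $\overline M$.

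Next, fix $y\in N$ with $\|y\|<\varepsilon$ in $\overline N$ and choose $\delta>0$ with $\|y\|<\varepsilon-\delta$. Using surjectivity of $\pi$, select any algebraic preimage $x_0\in\pi^{-1}(y)\subseteq M$. Applying the quotient-norm formula to the coset $x_0+\ker\overline\pi$, which maps to $y$, yields an element $k\in\ker\overline\pi$ with $\|x_0-k\|<\varepsilon-\delta$ in $\overline M$.

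The only substantive point is that $k$ lies in the closed kernel $\ker\overline\pi\subseteq\overline M$ rather than in the algebraic kernel $\ker\pi\subseteq M$, so subtracting it outright would carry $x_0$ out of the algebraic fiber $\pi^{-1}(y)$. This is exactly where \Cref{le:ker-is-dense} is used: since $\ker\pi$ is dense in $\ker\overline\pi$, there is $j\in\ker\pi$ with $\|k-j\|<\delta$. Then $x:=x_0-j$ lies in $M$ and satisfies $\pi(x)=\pi(x_0)-\pi(j)=y$, so $x\in\pi^{-1}(y)$, while $\|x\|\le\|x_0-k\|+\|k-j\|<(\varepsilon-\delta)+\delta=\varepsilon$ in $\overline M$, as desired.

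I do not anticipate a serious obstacle here: once the isometric identification $\overline N\cong\overline M/\ker\overline\pi$ is in place, the argument is a single application of the triangle inequality. The one place requiring care — and the reason \Cref{le:ker-is-dense} was isolated beforehand — is ensuring that the norm-reducing correction keeps the preimage inside $M$ and over the exact point $y$, which the kernel-density lemma provides.
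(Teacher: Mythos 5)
Your argument is correct and is essentially the paper's own proof: both take a small-norm preimage of $y$ in $\overline{M}$ (via surjectivity of $\overline{\pi}$ and the quotient-norm description), take an arbitrary algebraic preimage in $M$, and use the density of $\ker\pi$ in $\ker\overline{\pi}$ from \Cref{le:ker-is-dense} to correct the latter into an element of $\pi^{-1}(y)$ of norm $<\varepsilon$. The only difference is cosmetic bookkeeping (your explicit $\varepsilon-\delta$ split versus the paper's ``sufficiently close'' phrasing).
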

\begin{proof}
  On the one hand we can find
  \begin{equation*}
    \overline{x}\in (\overline{\pi})^{-1}(y)\subset \overline{M}
  \end{equation*}
  of norm $<\varepsilon$ simply because $\overline{\pi}$ is a $C^*$-algebra surjection. On the other hand, the surjectivity of $\pi$ ensures the existence of {\it some} $x'\in \pi^{-1}(y)\subset M$, perhaps not satisfying the norm constraint. But then $\overline{x}-x'\in \ker\overline{\pi}$, meaning by \Cref{le:ker-is-dense} that it is arbitrarily approximable with elements $x-x'\in \ker\pi$. If the latter is sufficiently close to $\overline{x}-x'$ then $x\in A$ will be sufficiently close to $\overline{x}$ to achieve $\|x\|<\varepsilon$.
\end{proof}

We are now ready for the main statement.

\begin{theorem}\label{th.cmpl}
  Assume that
  \begin{itemize}
  \item \Cref{eq:6} is a pullback;
  \item $r$ is onto;
  \item $C^G$ is AF;  
  \item \Cref{eq:6} embeds in its $C^*$ completion.  
  \end{itemize}
  Then, the $C^*$-completed diagram
  \begin{equation}\label{eq:7}
  \begin{tikzpicture}[auto,baseline=(current  bounding  box.center)]
    \path[anchor=base] (0,0) node (a) {$\overline A$} +(3,-.5) node (d) {$\overline D$} +(6,0) node (b) {$\overline B$} +(3,.5) node (c) {$\overline C$};
    \draw[->] (a) to[bend right=6] node[pos=.5,auto,swap] {$\scriptstyle \ell$} (d);
    \draw[->] (b) to[bend left=6] node[pos=.5,auto] {$\scriptstyle r$} (d);
    \draw[->] (c) to[bend right=6] node[pos=.5,auto,swap] {$$} (a);
    \draw[->] (c) to[bend left=6] node[pos=.5,auto,swap] {$$} (b);
  \end{tikzpicture}  
\end{equation}
is a pullback of $G$-$C^*$-algebras. 
\end{theorem}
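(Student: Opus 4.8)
The plan is to compare $\overline C$ with the \emph{genuine} pullback of the completed cospan. Write
$P := \{(a,b)\in \overline A\oplus\overline B : \ell(a)=r(b)\}$
for the fibered product of $\overline A\xrightarrow{\ell}\overline D\xleftarrow{r}\overline B$ in $G$-$C^*$-algebras, taken with the componentwise $G$-action and the norm inherited from $\overline A\oplus\overline B$; this is the concrete model of the $C^*$-pullback and enjoys the requisite universal property. Since the two legs $\overline C\to\overline A$ and $\overline C\to\overline B$ of \Cref{eq:7} agree after composing into $\overline D$, the universal property of $P$ supplies a canonical comparison morphism $\kappa:\overline C\to P$ of $G$-$C^*$-algebras, and proving that \Cref{eq:7} is a pullback amounts precisely to showing that $\kappa$ is an isomorphism.

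I would establish this through the uniqueness-of-norm machinery rather than by checking injectivity and surjectivity separately. First observe that the restriction $\kappa|_C$ is nothing but the inclusion of the algebraic pullback $C\subseteq A\oplus B$ into $\overline A\oplus\overline B$, which lands in $P$; because \Cref{eq:6} embeds in its $C^*$ completion, the maps $A\hookrightarrow\overline A$ and $B\hookrightarrow\overline B$ are injective, hence so is $\kappa|_C$, and pulling back the norm of $P$ along it equips $C$ with a $G$-invariant $C^*$-norm. If I can show in addition that $\kappa(C)$ is dense in $P$, then $\overline C$ — by hypothesis the completion of $C$ in a $G$-invariant $C^*$-norm — and $P$ are both $C^*$-completions of $C$ for $G$-invariant $C^*$-norms; since $C^G$ is AF, \Cref{cor.unq} forces these two norms to coincide. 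Consequently $\kappa|_C$ is isometric, its extension $\overline C\to P$ has closed image, and by density that image is all of $P$, so $\kappa$ is the desired isomorphism.

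The crux, and the step I expect to require genuine work, is the density of $\kappa(C)$ in $P$; this is exactly where the surjectivity of $r$ and \Cref{le:adm-surj} enter. Given $(a,b)\in P$ and $\varepsilon>0$, I would first approximate $a$ by some $a_0\in A$ and $b$ by some $b_0\in B$ to within $\varepsilon$. The pair $(a_0,b_0)$ need not lie in $C$, but contractivity of $\ell$ and $r$ together with $\ell(a)=r(b)$ gives $\|\ell(a_0)-r(b_0)\|_{\overline D}<2\varepsilon$, and the element $w:=\ell(a_0)-r(b_0)$ lies in the dense subalgebra $D$ since $\ell(a_0),r(b_0)\in D$. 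Applying \Cref{le:adm-surj} to the surjection $r:B\to D$ produces $c\in r^{-1}(w)\subseteq B$ with $\|c\|_{\overline B}<2\varepsilon$; then $b_1:=b_0+c$ satisfies $r(b_1)=r(b_0)+w=\ell(a_0)$, so $(a_0,b_1)\in C$, while $\|b-b_1\|<3\varepsilon$. Hence $\kappa(a_0,b_1)$ is within $3\varepsilon$ of $(a,b)$ in $P$, which proves the density.

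Two bookkeeping points remain routine. One should verify that $P$ really is the pullback in $G$-$C^*$-algebras — standard, as finite limits of $C^*$-algebras are computed as fibered products of the underlying sets carrying the natural $C^*$-structure, with the fixed-point functor (a right adjoint) compatible throughout. The injectivity of $\kappa$ could alternatively be deduced directly from \Cref{le.monic} by passing to fixed points, but the isometry argument above already delivers it, so I would not treat it separately.
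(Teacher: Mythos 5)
Your proposal is correct and follows essentially the same route as the paper: compare $\overline C$ with the concrete $C^*$-pullback $P$ of $\overline A\to\overline D\leftarrow\overline B$, use the AF hypothesis on $C^G$ via \Cref{cor.unq} to identify the norm induced from $P$ with that of $\overline C$ (the paper phrases this as injectivity of $\overline C\to P$, you as isometry of $\kappa|_C$ — the same mechanism), and establish density of the image of $C$ in $P$ by approximating $(a,b)$ in $A\times B$ and correcting the second coordinate by a small-norm preimage supplied by \Cref{le:adm-surj} and the surjectivity of $r$. The only difference is organizational, not mathematical.
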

\begin{proof}
Consider the $C^*$-pullback
  \begin{equation}\label{eq:1}
  \begin{tikzpicture}[auto,baseline=(current  bounding  box.center)]
    \path[anchor=base] (0,0) node (a) {$\overline A$} +(3,-.5) node (d) {$\overline D$} +(6,0) node (b) {$\overline B$.} +(3,.5) node (c) {$P$};
    \draw[->] (a) to[bend right=6] node[pos=.5,auto,swap] {$\scriptstyle \ell$} (d);
    \draw[->] (b) to[bend left=6] node[pos=.5,auto] {$\scriptstyle r$} (d);
    \draw[->] (c) to[bend right=6] node[pos=.5,auto,swap] {$$} (a);
    \draw[->] (c) to[bend left=6] node[pos=.5,auto,swap] {$$} (b);
  \end{tikzpicture}  
\end{equation}
By \Cref{cor.unq} the canonical map $\overline{C}\to P$ is one-to-one, because $C$ admits a unique $G$-invariant $C^*$ norm, which must be the universal one inherited from $\overline{C}$. 

On the other hand, the surjectivity of $\overline{C}\to P$ follows from that of $r$. To see this, consider a pair of elements
\begin{equation*}
  a\in \overline{A},\ b\in \overline{B}
\end{equation*}
with equal images in $\overline{D}$ through $\ell$ and $r$ in \Cref{eq:1} respectively. The pair forms an element $(a,b)\in P$, by the very definition of a pullback. We want to argue that $(a,b)$ is arbitrarily approximable in the sup norm on $A\times B$ by elements $(a_i,b_i)$ in the pullback $C$, i.e. such that $\ell(a_i)=r(b_i)$.

First, approximate $(a,b)$ arbitrarily well with $(a_i,b_i')\in A\times B$ disregarding for the moment the coincidence of their images through $\ell$ and $r$ respectively. We have
\begin{equation*}
  \|\ell(a_i)-r(b_i')\|<\varepsilon.
\end{equation*}
\Cref{le:adm-surj} ensures the existence of some $\gamma_i\in B$ with
\begin{itemize}
\item $\|\gamma_i\|<\varepsilon$
\item $r(\gamma_i)=r(b_i')-\ell(a_i)$. 
\end{itemize}
Finally, set $b_i=b_i'-\gamma_i$. This element will
\begin{itemize}
\item be close to the original element $b$;
\item have image $\ell(a_i)$ through $r$.  
\end{itemize}
In short, $(a_i,b_i)\in C$ will be a good approximation for $(a,b)\in P$.
\end{proof}

The surjectivity assumption is essential in \Cref{th.cmpl}, as the following remark shows.

\begin{example}\label{ex:must-surj}
  In \Cref{eq:6}, we take $D=C(\bS^1)$, which will hence be its own $C^*$ closure. $A$ and $B$ will be dense $*$-subalgebras thereof such that
  \begin{itemize}
  \item $\overline{A}=\overline{B}=D$;
  \item $A\cap B$ consists of precisely the scalars. 
  \end{itemize}
  To achieve all of this, first let $A$ be the $*$-algebra generated by the standard unitary
  \begin{equation*}
    \id:\bS^1\to \bS^1\subset \bC.
  \end{equation*}
  We indeed have $\overline{A}=C$, sine the latter is the universal $C^*$-algebra generated by a unitary.

  On the other hand, for $B$ we take the ``twist'' of $A$ by a sufficiently general self-homeomorphism $\varphi$ of the circle $\bS^1$; in other words,
  \begin{equation*}
    B = \{f\circ\varphi\ |\ f\in A\}. 
  \end{equation*}
  ``Sufficiently general'' here might mean, for instance, that the set
  \begin{equation*}
    \{z\in \bS^1\ |\ \varphi\text{ is not differentiable at }z\}
  \end{equation*}
  is dense in the unit circle. To see that this will ensure $A\cap B=\bC$, note that every non-constant trigonometric polynomial $f\in A$ will map some interval arc $J\subseteq \bS^1$ diffeomorphically onto its image. But then, denoting by
  \begin{equation*}
    \psi:f(J)\to J
  \end{equation*}
  the inverse of that diffeomorphism, $\psi\circ f\circ\varphi|_J$ equals $\varphi$ on $J$. Since the latter has points of non-differentiability in $J$, the non-constant $f\circ\varphi\in B$ cannot possibly be equal to any element of $A$.
\end{example}

We now apply the preceding discussion to the setup of \cite{trm-zqi,trm-fXl}. Recall that the authors of said papers work with graphs $Q'\subset Q''$ and maps
\begin{equation}\label{eq:4}
  \begin{tikzpicture}[auto,baseline=(current  bounding  box.center)]
    \path[anchor=base] (0,0) node (u) {$L(Q')$} +(4,-.5) node (w) {$L(Q')\otimes k[t,t^{-1}]$} +(8,0) node (v) {$L(Q'')\otimes k[t,t^{-1}]$};
    \draw[->] (u) to[bend right=6] node[pos=.5,auto,swap] {$\scriptstyle \delta$} (w);
    \draw[->] (v) to[bend left=6] node[pos=.5,auto] {$\scriptstyle \pi\otimes\id$} (w);
  \end{tikzpicture}   
\end{equation}
where $L(-)$ denotes the Leavitt path algebra construction, $k$ denotes a ground field, and $\pi$ is onto. There is an analogous picture for graph $C^*$-algebras, whereupon $k=\bC$; this is the case of interest here.

The two papers work in the Leavitt path algebra and $C^*$ setting (\cite{trm-fXl,trm-zqi} respectively), proving parallel results: while \cite[Theorem 3.2]{trm-fXl} argues that a certain diagram
\begin{equation}\label{eq:10}
  \begin{tikzpicture}[auto,baseline=(current  bounding  box.center)]
    \path[anchor=base] (0,0) node (u) {$L(Q')$} +(4,-.5) node (w) {$L(Q')\otimes k[t,t^{-1}]$} +(8,0) node (v) {$L(Q'')\otimes k[t,t^{-1}]$}  +(4,.5) node (top) {$L(Q)$};
    \draw[->] (u) to[bend right=6] node[pos=.5,auto,swap] {$\scriptstyle \delta$} (w);
    \draw[->] (v) to[bend left=6] node[pos=.5,auto] {$\scriptstyle \pi\otimes\id$} (w);
    \draw[->] (top) to[bend right=6] node[pos=.5,auto,swap] {$\scriptstyle $} (u);
    \draw[->] (top) to[bend left=6] node[pos=.5,auto,swap] {$\scriptstyle $} (v);
  \end{tikzpicture}   
\end{equation}
is a pullback of graded $*$-algebras, the analytic analogue \cite[Theorem, p.2]{trm-zqi} shows that the $C^*$ graph algebra version
\begin{equation}\label{eq:11}
  \begin{tikzpicture}[auto,baseline=(current  bounding  box.center)]
    \path[anchor=base] (0,0) node (u) {$C^*(Q')$} +(4,-.5) node (w) {$C^*(Q')\otimes C(\bS^1)$} +(8,0) node (v) {$C^*(Q'')\otimes C(\bS^1)$}  +(4,.5) node (top) {$C^*(Q)$};
    \draw[->] (u) to[bend right=6] node[pos=.5,auto,swap] {$\scriptstyle \delta$} (w);
    \draw[->] (v) to[bend left=6] node[pos=.5,auto] {$\scriptstyle \pi\otimes\id$} (w);
    \draw[->] (top) to[bend right=6] node[pos=.5,auto,swap] {$\scriptstyle $} (u);
    \draw[->] (top) to[bend left=6] node[pos=.5,auto,swap] {$\scriptstyle $} (v);
  \end{tikzpicture}   
\end{equation}
is a pullback of $C^*$-algebras equipped with actions by the circle group $\bS^1$ (with $C(-)$ denoting the algebra of continuous functions).

We now have:

\begin{proposition}\label{pr.1st}
  \cite[Theorem 3.2]{trm-fXl} implies \cite[Theorem, p.2]{trm-zqi}. 
\end{proposition}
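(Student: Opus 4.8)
The plan is to recognize \Cref{eq:10} as an instance of the abstract square \Cref{eq:6} and then apply \Cref{th.cmpl} verbatim. Concretely, take $G=\bS^1$ acting by the gauge action throughout, and set
\[
  A=L(Q'),\quad B=L(Q'')\otimes\bC[t,t^{-1}],\quad D=L(Q')\otimes\bC[t,t^{-1}],\quad C=L(Q),
\]
with $\ell=\delta$ and $r=\pi\otimes\id$. The whole content of the proposition then reduces to checking the four hypotheses of \Cref{th.cmpl} for this data, after which its conclusion identifies the completed square \Cref{eq:7} with \Cref{eq:11}.

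Three of the four hypotheses are either immediate or standard. The pullback hypothesis is exactly \cite[Theorem 3.2]{trm-fXl}. Surjectivity of $r$ is inherited from that of $\pi$, since tensoring a surjection with the identity on $\bC[t,t^{-1}]$ preserves surjectivity. For the embedding hypothesis I would invoke the fact, recorded in the remark following \Cref{def.gca}, that the canonical map $L(E)\to C^*(E)$ into the $C^*$-envelope is injective; this handles the corners $A=L(Q')$ and $C=L(Q)$ directly, and for $B$ and $D$ one tensors these embeddings with the dense inclusion $\bC[t,t^{-1}]\hookrightarrow C(\bS^1)$ of Laurent polynomials into continuous functions on the circle. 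Injectivity of the tensored map lands it in the minimal $C^*$-tensor product, and since $C(\bS^1)$ is nuclear the tensor norm is unambiguous, so $B$ and $D$ embed gauge-equivariantly into $C^*(Q'')\otimes C(\bS^1)$ and $C^*(Q')\otimes C(\bS^1)$ respectively.

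The crux is the remaining hypothesis, that $C^G=L(Q)^{\bS^1}$ be AF. Here I would appeal to the structure theory of the gauge-invariant (degree-zero) subalgebra of a Leavitt path algebra: it is ultramatricial, i.e. a directed union of finite-dimensional semisimple $*$-subalgebras, which is exactly the notion of AF $*$-subalgebra feeding \Cref{cor.unq}. This is the one place where graph-specific input beyond \cite[Theorem 3.2]{trm-fXl} is needed, and it is what makes the abstract machine applicable.

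With all hypotheses verified, \Cref{th.cmpl} yields a pullback of $\bS^1$-$C^*$-algebras, and it remains to identify the completions with the objects of \Cref{eq:11}. The AF property feeds \Cref{cor.unq} to pin down a unique $G$-invariant $C^*$-norm on $L(Q)$; since the gauge-equivariant $C^*$-norm inherited from $C^*(Q)$ is one such, it forces $\overline{C}=C^*(Q)$. Similarly $\overline{A}=C^*(Q')$, while the two tensor-product objects complete to $C^*(Q'')\otimes C(\bS^1)$ and $C^*(Q')\otimes C(\bS^1)$ as noted above, with $\ell$ and $r$ completing to $\delta$ and $\pi\otimes\id$. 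Hence \Cref{eq:7} is precisely \Cref{eq:11}, which is the assertion of \cite[Theorem, p.2]{trm-zqi}. I expect the main obstacle to be the bookkeeping around these completion identifications---above all confirming that the pullback vertex $C=L(Q)$ completes to $C^*(Q)$ and not to some competing $C^*$-algebra---together with citing the AF/ultramatricial structure of the core in a form that matches \Cref{cor.unq}.
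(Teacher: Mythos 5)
Your proposal is correct and follows the paper's proof exactly: the paper likewise reduces everything to \Cref{th.cmpl} with $G=\bS^1$ and the gauge action, citing the AF (ultramatricial) structure of the $\bS^1$-invariant subalgebra of a Leavitt path algebra as the one piece of graph-specific input. The additional bookkeeping you supply---surjectivity of $\pi\otimes\id$, the embedding into the $C^*$-envelopes, and the identification of the completions with the objects of \Cref{eq:11}---is left implicit in the paper but is consistent with its argument.
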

\begin{proof}
  This follows from \Cref{th.cmpl} for $G=\bS^1$ and the fact that the $\bS^1$-invariant subalgebras of Leavitt path algebras are AF.
\end{proof}
  

The pattern recurs in \cite{hrt}: a pushout graph diagram
\begin{equation*}\label{eq:12}
  \begin{tikzpicture}[auto,baseline=(current  bounding  box.center)]
    \path[anchor=base] (0,0) node (a) {$F_1$} +(3,-.5) node (d) {$F_1\cap F_2$} +(6,0) node (b) {$F_2$} +(3,.5) node (c) {$E$};
    \draw[->] (d) to[bend left=6] node[pos=.5,auto,swap] {$\scriptstyle $} (a);
    \draw[->] (d) to[bend right=6] node[pos=.5,auto] {$\scriptstyle $} (b);
    \draw[->] (a) to[bend left=6] node[pos=.5,auto,swap] {$$} (c);
    \draw[->] (b) to[bend right=6] node[pos=.5,auto,swap] {$$} (c);
  \end{tikzpicture}  
\end{equation*}
results via \cite[Theorem 3.1]{hrt} in a pullback diagram in the category of $\bS^1$-$C^*$-algebras
\begin{equation}\label{eq:13}
  \begin{tikzpicture}[auto,baseline=(current  bounding  box.center)]
    \path[anchor=base] (0,0) node (u) {$C^*(F_1)$} +(4,-.5) node (w) {$C^*(F_1\cap F_2)$} +(8,0) node (v) {$C^*(F_2)$}  +(4,.5) node (top) {$C^*(E)$};
    \draw[->] (u) to[bend right=6] node[pos=.5,auto,swap] {$\scriptstyle $} (w);
    \draw[->] (v) to[bend left=6] node[pos=.5,auto] {$\scriptstyle $} (w);
    \draw[->] (top) to[bend right=6] node[pos=.5,auto,swap] {$\scriptstyle $} (u);
    \draw[->] (top) to[bend left=6] node[pos=.5,auto,swap] {$\scriptstyle $} (v);
  \end{tikzpicture}   
\end{equation}
consisting of surjections only. It is then observed in \cite[Remark 3.2]{hrt} that a parallel proof would dispatch the Leavitt path algebra version (with all instances of $C^*(-)$ replaced by the corresponding $L(-)$). In that context, the analogue of \Cref{pr.1st} is 

\begin{proposition}
  \cite[Theorem 3.1]{hrt} follows from its Leavitt path algebra analogue.
  \qedhere
\end{proposition}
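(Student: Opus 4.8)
The plan is to reproduce the proof of \Cref{pr.1st} almost verbatim, invoking \Cref{th.cmpl} with $G=\bS^1$. The hypothesis of the present statement---the Leavitt path algebra analogue of \cite[Theorem 3.1]{hrt}---is precisely the assertion that the diagram obtained from \Cref{eq:13} by replacing each $C^*(-)$ with the corresponding $L(-)$ is a pullback of $\bS^1$-graded $*$-algebras. I would begin by matching this against the abstract \Cref{eq:6} through the identification
\begin{equation*}
  A=L(F_1),\quad B=L(F_2),\quad D=L(F_1\cap F_2),\quad C=L(E),
\end{equation*}
with $\ell$ and $r$ the two restriction maps; the assumed analogue then supplies the first bullet of \Cref{th.cmpl}.

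Next I would check the three remaining hypotheses. For the surjectivity of $r$, I would note that the maps assembling the diagram of \cite[Theorem 3.1]{hrt}, and equally those of its Leavitt counterpart produced by the same graph-combinatorial recipe, are the evident restriction surjections attached to the subgraph inclusions; in particular $r$ is onto. For the AF condition I would argue exactly as in \Cref{pr.1st}: here $C^{\bS^1}=L(E)^{\bS^1}$ is the gauge-invariant (degree-zero) subalgebra of a Leavitt path algebra, and such subalgebras are AF. For the last bullet I would appeal to the injectivity of each canonical map $L(F)\to C^*(F)$ together with the fact recorded after \Cref{def.gca} that $C^*(F)$ is the $C^*$-envelope of $L(F)$, so that the $C^*$-completion of the Leavitt diagram is exactly the graph $C^*$-algebra diagram \Cref{eq:13}.

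Once the four hypotheses are in place, \Cref{th.cmpl} immediately delivers that \Cref{eq:13} is a pullback of $\bS^1$-$C^*$-algebras, which is the content of \cite[Theorem 3.1]{hrt}. I do not expect a genuine obstacle: the argument is a template-level application of \Cref{th.cmpl}, and the only points that warrant a second look are the surjectivity of the structure maps---automatic, since \cite{hrt} yields a diagram of surjections---and the AF-ness of the invariant core of $L(E)$, both of which are inherited directly from \Cref{pr.1st}.
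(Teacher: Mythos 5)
Your proposal is correct and matches the paper's (essentially tacit) argument: the paper states this proposition with no written proof, treating it as the same template application of \Cref{th.cmpl} as \Cref{pr.1st}, with the surjectivity hypothesis supplied by the observation that the diagram \Cref{eq:13} consists of surjections only and the AF hypothesis by the gauge-invariant subalgebra of $L(E)$. Your verification of the four hypotheses, including the embedding of the Leavitt diagram into its $C^*$-completion via the $C^*$-envelope property, is exactly the intended reasoning.
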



On the other hand, the results of \cite{cht} involve {\it non}-equivariant pullbacks of graph algebras. The general principle at work is the same however:

\begin{proposition}
  \cite[Theorem 3.3]{cht} follows from its Leavitt path algebra analogue. 
\end{proposition}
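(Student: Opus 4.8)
The plan is to reduce to \Cref{th.cmpl} by \emph{reinstating} the gauge action that is present but unused in the statement of \cite[Theorem 3.3]{cht}. Although that result is phrased as a pullback of plain $C^*$-algebras, the graph $C^*$-algebras appearing in it all carry their canonical gauge $\bS^1$-actions, and the structure maps of the diagram—being morphisms of graph algebras induced by graph-level data, hence sending vertex generators to vertex generators and edge generators to (combinations of) edge generators—preserve degree and are therefore automatically $\bS^1$-equivariant. Consequently the non-equivariant cospan underlies an honest diagram of $\bS^1$-$C^*$-algebras, and likewise its Leavitt path algebra analogue refines to a diagram of $\bZ$-graded $*$-algebras, even though \cite{cht} does not phrase matters this way.

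First I would record the hypotheses of \Cref{th.cmpl} for $G=\bS^1$ as applied to the Leavitt diagram: it is a pullback (this is precisely the Leavitt path algebra analogue we are assuming), the relevant leg $r$ is onto (inherited from the surjectivity built into the graph-theoretic construction, exactly as in \Cref{pr.1st}), the gauge-invariant subalgebra of the apex is AF (a standard feature of Leavitt path algebras), and each corner embeds into its $C^*$ completion (the Leavitt path algebra of a graph embeds into the associated graph $C^*$-algebra). With these in hand, \Cref{th.cmpl} yields that the $C^*$-completed diagram is a pullback \emph{in the category of $\bS^1$-$C^*$-algebras}.

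It then remains to forget the equivariance. Here one uses that the forgetful functor from $\bS^1$-$C^*$-algebras to $C^*$-algebras creates limits: given the equivariant cospan, its $C^*$-pullback $\{(a,b)\ |\ \ell(a)=r(b)\}$ carries the diagonal $\bS^1$-action, which is strongly continuous since it is the restriction of the product action on the completed corners, and this is precisely the pullback computed in $\bS^1$-$C^*$-algebras. Thus the underlying $C^*$-algebra of the equivariant pullback is the plain $C^*$-pullback, and the conclusion of \Cref{th.cmpl} descends verbatim to the statement of \cite[Theorem 3.3]{cht}.

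The genuinely new point, compared with \Cref{pr.1st} and the \cite{hrt} case, is this final descent step: one must verify that a pullback in the equivariant category remains a pullback once the group action is discarded. I expect this to be the only real obstacle, and it is resolved by the pointwise (underlying) nature of limits of equivariant objects, together with the strong continuity of the diagonal action on the $C^*$-pullback; everything else is a hypothesis check identical in spirit to the equivariant applications already treated above.
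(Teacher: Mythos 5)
Your argument hinges on the claim that the structure maps in the diagram of \cite[Theorem 3.3]{cht} are ``automatically $\bS^1$-equivariant'' because they are induced by graph-level data, so that \Cref{th.cmpl} can be applied with $G=\bS^1$ and the AF hypothesis discharged by the gauge-invariant core. That claim is precisely what fails in this last application: the paper explicitly singles out the pullbacks of \cite{cht} as \emph{non}-equivariant, in contrast to those of \Cref{pr.1st} and the \cite{hrt} case. The graph morphisms there are non-injective and the induced algebra maps do not respect the $\bZ$-grading, so there is no diagram of $\bS^1$-$C^*$-algebras (nor of graded $*$-algebras) to feed into \Cref{th.cmpl} with $G=\bS^1$. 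A telltale sign is that your proof never uses the standing hypothesis of \cite[Theorem 3.3]{cht} that the apex graph has no loops; if the gauge action were available, that hypothesis would be superfluous, since the gauge-invariant subalgebra of any Leavitt path algebra is AF.

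The paper's proof instead takes $G$ to be the \emph{trivial} group. The AF hypothesis of \Cref{th.cmpl} must then be verified for the full algebra $C$ at the apex of \Cref{eq:6}, not merely for a fixed-point subalgebra, and this is exactly where the no-loops assumption enters: an acyclic graph has AF Leavitt path algebra (cf.\ \cite[Theorem 2.4]{kpr}). With $G$ trivial there is no equivariance to arrange and no descent step to perform. Your final paragraph --- that a pullback of $\bS^1$-$C^*$-algebras has as underlying object the plain $C^*$-pullback, because the diagonal action on $\{(a,b)\ |\ \ell(a)=r(b)\}$ is strongly continuous --- is correct in itself, but it addresses the wrong obstacle: the difficulty is obtaining an equivariant diagram in the first place, not forgetting the action afterwards.
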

\begin{proof}
  This time around the compact group $G$ will be trivial. The graph algebra at the top of the diagram, corresponding to $C$ in \Cref{eq:6}, is attached to a graph assumed to have no loops. It follows that the Leavitt path algebra is AF (e.g. \cite[Theorem 2.4]{kpr}) and the conclusion once more follows from \Cref{th.cmpl}.
\end{proof}



\bibliographystyle{plain}
\addcontentsline{toc}{section}{References}

\def\polhk#1{\setbox0=\hbox{#1}{\ooalign{\hidewidth
  \lower1.5ex\hbox{`}\hidewidth\crcr\unhbox0}}}

\Addresses

\end{document}